\newtheorem{theorem}{Theorem}[section]
\newtheorem{lemma}{Lemma}[section]
\newtheorem{proposition}{Proposition}[section]
\begin{document}

\title{On complete hypersurfaces with constant scalar curvature $n(n-1)$ in the unit sphere}

\author{Jinchuan Bai, Yong Luo\footnote{corresponding author, email address: yongluo-math@cqut.edu.cn}}

\date{}

\maketitle              


\begin{abstract}
	
	Let $M^n$ be an $n$-dimensional complete and locally conformally flat hypersurface in the unit sphere $\mathbb{S}^{n+1}$ with constant scalar curvature $n(n-1)$.  We show that if the total curvature $\left( \int _ { M } | H | ^ { n } d v \right) ^ { \frac { 1 } { n } }$ of $M$ is sufficiently small, then $M^n$ is  totally geodesic.
	
\end{abstract}	
\textbf{Keywords} Hypersurfaces; Constant scalar curvature; Gap theorem \\[1mm]
\textbf{ MSC2020} 53C24, 53C42\\[0.4cm]
\section{Introduction}

Inspired by the famous Bernstein theorem that a minimal entire graph $M^n(n \leq7)$ immersed in $\mathbb{R}^{n+1}$ must be a hyperplane \cite{Sim}, there are many results characterizing the hyperplane as the unique minimal hypersurface in a Euclidean space under certain geometrical or analytical assumptions \cite{CL}\cite{DP}\cite{DP2}\cite{FS}\cite{SZ}\cite{SSY}. In particular, Ni \cite{Ni} and Yun \cite{Yun} proved that a complete minimal hypersurface in a Euclidean space is a hyperplane if its total scalar curvature is sufficiently small.

A hypersurface is minimal if its mean curvature is zero and the mean curvature is, in our definition, $\frac{1}{n}$ of the first elementary symmetric function of the second fundamental form. Except mean curvature, scalar curvature is the most important curvature for hypersurfaces and for hypersurfaces in a Euclidean space scalar curvature is the second elementary symmetric function of the second fundamental form.

Recently, Li, Xu and Zhou studied complete hypersurfaces in a Euclidean space with zero scalar curvature and they obtained a Ni and Yun's type Gap theorem.
 \begin{theorem}[\cite{LXZ}]\label{LXZ}
Let $M^n(n \geq 3)$ be a locally conformally flat complete hypersurface in $\mathbb{R}^{n+1}$ with zero scalar curvature. Then there exists a positive constant $C(n)$ depending only on $n$ such that $M$ is a hyperplane if  $$\int_{M}|H|^{n}dv<C(n).$$
 \end{theorem}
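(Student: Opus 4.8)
The plan is to reduce the rigidity statement to an integral Bochner-type estimate driven by the smallness of $\int_M|H|^n\,dv$, in the spirit of Ni--Yun. First I would record the pointwise algebra forced by the two hypotheses. Writing $h$ for the second fundamental form, $nH=\operatorname{tr}h$ and $|A|^2=|h|^2$, the Gauss equation in $\mathbb{R}^{n+1}$ gives scalar curvature $R=(nH)^2-|A|^2$, so the vanishing of $R$ is exactly $|A|^2=n^2H^2$; in particular $|H|=|A|/n$, hence $\int_M|H|^n\,dv=n^{-n}\int_M|A|^n\,dv$, and the smallness hypothesis is equivalent to smallness of $\int_M|A|^n\,dv$. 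Local conformal flatness for $n\ge 3$ forces $M$ to be quasi-umbilic: at each point there are at most two distinct principal curvatures, with multiplicities $n-1$ and $1$. Feeding $R=0$ into this structure makes the pair rigid (either the $(n-1)$-fold value vanishes, or the two values are $2H$ and $-(n-2)H$), which is the structural input I will exploit in the Kato step below.

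Next I would set up the analytic identity. Combining the Simons-type formula (Codazzi plus the Ricci identity, with flat ambient space) with $\tfrac12\Delta\big((nH)^2-|A|^2\big)=0$, I obtain the Cheng--Yau form
\[
\square(nH)=|\nabla h|^2-|\nabla(nH)|^2+nH\operatorname{tr}(h^3)-|A|^4,
\]
where $\square u=\sum_{i,j}(nH\delta_{ij}-h_{ij})u_{;ij}$ is the operator of the first Newton tensor $P=nH\,g-h$. Since $R$ is constant, $P$ is divergence free, so $\square u=\operatorname{div}(P\nabla u)$ and, for compactly supported $\varphi$,
\[
\int_M\varphi\,\square(nH)\,dv=-\int_M\langle P\,\nabla(nH),\nabla\varphi\rangle\,dv.
\]
Because $|A|=n|H|$ gives $|\nabla(nH)|=|\nabla|A||$, the decisive quantity is the gap $|\nabla h|^2-|\nabla|A||^2$.

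The heart of the argument — and the step I expect to be the main obstacle — is a refined Kato inequality
\[
|\nabla h|^2-|\nabla(nH)|^2\ \ge\ \varepsilon_n\,|\nabla h|^2,\qquad \varepsilon_n>0 .
\]
The naive trace splitting $|\nabla h|^2=|\nabla\mathring h|^2+\tfrac1n|\nabla(nH)|^2$, together with $|\mathring h|^2=n(n-1)H^2$ and the plain Kato inequality, only recovers the borderline bound $|\nabla h|^2-|\nabla(nH)|^2\ge 0$, with no room to spare. The strictly positive gain must therefore come from the Codazzi equations read in an eigenframe adapted to the two-principal-curvature decomposition: the multiplicity-$(n-1,1)$ structure constrains $\nabla H$ and forces frame-rotation (connection) terms into $\nabla h$ that are invisible to $\nabla|A|$, and these furnish the surplus. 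Extracting a clean, dimension-dependent $\varepsilon_n$ from this computation, uniformly across the locus where the two eigenvalues collide, is the technical crux; I would do it by expanding $h_{ij;k}$ in the adapted frame and estimating the off-diagonal components against $\nabla|A|$.

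With the Kato gap in hand the rest is a standard absorption. Taking $\varphi=\eta^2|A|^{2\alpha}$ with $\alpha=(n-2)/2$ and a cutoff $\eta$, the divergence-form identity, the Kato inequality, and $|nH\operatorname{tr}(h^3)|\le|A|^4$ yield, after Young's inequality on the cross terms,
\[
\int_M\eta^2\big|\nabla|A|^{1+\alpha}\big|^2\,dv\ \le\ C\int_M|A|^4\,\eta^2|A|^{2\alpha}\,dv+C\int_M|\nabla\eta|^2|A|^{2+2\alpha}\,dv .
\]
The curvature term is controlled by the Michael--Simon Sobolev inequality applied to $u=\eta|A|^{1+\alpha}$: writing $\int|A|^4\eta^2|A|^{2\alpha}=\int u^2|A|^2$ and applying Hölder with exponents $\tfrac{n}{n-2}$ and $\tfrac n2$ produces the factor $\big(\int_M|A|^n\big)^{2/n}$, which is small by hypothesis, times $\big(\int u^{2n/(n-2)}\big)^{(n-2)/n}\le C(n)\int(|\nabla u|^2+H^2u^2)$; the boundary term $H^2u^2=n^{-2}|A|^2u^2$ is again of the same small-absorbable type. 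Choosing $\int_M|H|^n<C(n)$ small enough to absorb these into the left-hand side and using $2(1+\alpha)=n$ gives
\[
\Big(\int_{B_R}|A|^{\frac{n^2}{n-2}}\,dv\Big)^{\frac{n-2}{n}}\ \le\ \frac{C}{R^2}\int_M|A|^n\,dv,
\]
which tends to $0$ as $R\to\infty$. Hence $|A|\equiv 0$, so $M$ is totally geodesic, and a complete totally geodesic hypersurface of $\mathbb{R}^{n+1}$ is a hyperplane.
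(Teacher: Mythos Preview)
Your refined Kato inequality is false, and this breaks the whole scheme. Take the cylinder $M=\gamma\times\mathbb{R}^{n-1}\subset\mathbb{R}^2\times\mathbb{R}^{n-1}$, where $\gamma$ is a plane curve with curvature $\kappa$. This hypersurface is locally conformally flat, has principal curvatures $(0,\dots,0,\kappa)$, hence $R=0$ and $|A|=n|H|=|\kappa|$. In the product frame the connection forms $\omega_{nr}$ vanish, so the only nonzero covariant derivative of $h$ is $h_{nn,n}=\kappa'$; therefore $|\nabla h|^2=(\kappa')^2=|\nabla(nH)|^2$ with \emph{no} surplus. Moreover $nH\operatorname{tr}(h^3)=\kappa\cdot\kappa^3=|A|^4$, so your Cheng--Yau identity collapses to $\square(nH)=0$. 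Thus the entire right-hand side of your integral estimate is identically zero on every such cylinder, and no choice of $\varepsilon_n>0$ can be extracted from the frame computation you describe. Since cylinders satisfy all hypotheses except finiteness of $\int_M|H|^n$, the $|A|$-based Simons machinery simply cannot see the difference between a cylinder and a hyperplane; your conclusion $|A|\equiv 0$ is unreachable by this route.

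The approach that actually works (and the one the paper adopts for the sphere analogue, following \cite{LXZ}) runs the analysis on the Ricci tensor rather than on $h$. With $R=0$, set $r_{ij}=R_{ij}$; local conformal flatness makes $r$ a Codazzi tensor ($r_{ij,k}=r_{ik,j}$), and the curvature decomposition gives the clean identity $\sum_{i,j}r_{ij}\Delta r_{ij}=\frac{n}{n-2}\operatorname{tr}(r^3)$. Here only the \emph{ordinary} Kato inequality $|\nabla r|^2\ge|\nabla|r||^2$ is needed; together with Michael--Simon and the smallness of $\int|H|^n$ (which controls $\int|r|^{n/2}$) one absorbs the cubic term and obtains $r\equiv0$. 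By Gauss this means $nHh_{ij}=\sum_kh_{ik}h_{kj}$, and an elementary algebraic argument (Lemma~2.3 in the paper) then forces at least $n-1$ principal curvatures to vanish. The last step --- ruling out the nontrivial cylinders that your approach could never exclude --- is a \emph{separate} argument using completeness and the smallness of $\int|H|^n$ directly (in $\mathbb{R}^{n+1}$: a cylinder over a non-straight curve has infinite $\int|H|^n$), not a differential inequality.
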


It is very natural to ask if one could obtain similar result for hypersurfaces in the unit sphere with zero second elementary symmetric function of the second
fundamental form. In this paper we answer this question affirmatively.
\begin{theorem}
Let $M^n(n \geq 3)$ be a complete locally conformally flat hypersurface in unit sphere  $\mathbb{S}^{n+1}$
with constant scalar curvature $n(n-1)$. There exists a sufficiently small number $\alpha$ which depends only on dimension $n$ such that if
$$\left( \int _ { M } | H | ^ { n } d v \right) ^ { \frac { 1 } { n } } < \alpha,$$
then $M$ is totally geodesic.
\end{theorem}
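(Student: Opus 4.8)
The plan is to reduce the theorem to a vanishing statement for the traceless second fundamental form and then to prove that statement by feeding a Simons-type inequality into a Hoffman--Spruck (Michael--Simon type) Sobolev inequality. Let $h_{ij}$ be the second fundamental form of $M$ in $\mathbb{S}^{n+1}$, let $H=\frac1n\sum_i h_{ii}$ be the mean curvature, $S=\sum_{ij}h_{ij}^2$, and $\phi_{ij}=h_{ij}-H\delta_{ij}$ the traceless part, so $|\phi|^2=S-nH^2$. The Gauss equation gives scalar curvature $R=n(n-1)+n^2H^2-S$, so the hypothesis $R\equiv n(n-1)$ yields the pointwise identities
\begin{equation}
S=n^2H^2,\qquad |\phi|^2=n(n-1)H^2 .
\end{equation}
Hence $\int_M|\phi|^n\,dv=(n(n-1))^{n/2}\int_M|H|^n\,dv$, so the smallness hypothesis says exactly that $\|\phi\|_{L^n(M)}$ is small, and it suffices to prove $\phi\equiv0$: together with $|\phi|^2=n(n-1)H^2$ this forces $H\equiv0$ and thus $h_{ij}\equiv0$.

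Next I would record the Simons identity for a hypersurface in $\mathbb{S}^{n+1}$, which after passing to the traceless part reads
\begin{equation}
\tfrac12\Delta|\phi|^2=|\nabla\phi|^2+\sum_{i,j}\phi_{ij}(nH)_{,ij}+n|\phi|^2+nH\,\mathrm{tr}(\phi^3)+nH^2|\phi|^2-|\phi|^4 ,
\end{equation}
where the positive term $n|\phi|^2$ is the ambient-curvature contribution, a bonus absent in the Euclidean case. Using the universal bound $|\mathrm{tr}(\phi^3)|\le\frac{n-2}{\sqrt{n(n-1)}}|\phi|^3$ together with $|H|=|\phi|/\sqrt{n(n-1)}$, the last three terms combine to at least $-\frac{2(n-2)}{n-1}|\phi|^4$, giving
\begin{equation}
\tfrac12\Delta|\phi|^2\ge|\nabla\phi|^2+\sum_{i,j}\phi_{ij}(nH)_{,ij}+n|\phi|^2-\tfrac{2(n-2)}{n-1}|\phi|^4 .
\end{equation}
Local conformal flatness enters through the structure of conformally flat hypersurfaces: at every point at least $n-1$ principal curvatures coincide, i.e. $\phi$ is quasi-umbilical. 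This makes the bound on $\mathrm{tr}(\phi^3)$ sharp and, more importantly, I expect it to yield a refined Kato inequality $|\nabla\phi|^2\ge(1+\delta_n)|\nabla|\phi||^2$ with $\delta_n>0$, which will be indispensable below.

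Then I would multiply by a cutoff $\eta^2$ ($0\le\eta\le1$, compact support) and integrate. The term $\int_M\eta^2\sum_{ij}\phi_{ij}(nH)_{,ij}$, the genuinely new difficulty compared with the minimal case since here $H$ is not constant, I would integrate by parts; the Codazzi equation gives $\sum_j\phi_{ij,j}=(n-1)H_{,i}$, so this term produces $-n(n-1)\int_M\eta^2|\nabla H|^2=-\int_M\eta^2|\nabla|\phi||^2$ plus cross terms carrying a factor $\nabla\eta$. Combining with $\int_M\eta^2|\nabla\phi|^2$ through the refined Kato inequality retains a positive multiple of $\int_M\eta^2|\nabla|\phi||^2$ on the left. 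For the quartic term I would apply Hölder, $\int_M\eta^2|\phi|^4\le\big(\int_M(\eta|\phi|)^{\frac{2n}{n-2}}\big)^{\frac{n-2}{n}}\|\phi\|_{L^n}^2$, followed by the Sobolev inequality
\begin{equation}
\Big(\int_M(\eta|\phi|)^{\frac{2n}{n-2}}\,dv\Big)^{\frac{n-2}{n}}\le C(n)\int_M\big(|\nabla(\eta|\phi|)|^2+(1+H^2)\eta^2|\phi|^2\big)\,dv .
\end{equation}
Since $(1+H^2)\eta^2|\phi|^2=\eta^2|\phi|^2+\frac1{n(n-1)}\eta^2|\phi|^4$, both the quartic self-interaction and the lower-order term are, once $\alpha$ (hence $\|\phi\|_{L^n}$) is small, absorbed into the coercive terms $\int_M\eta^2(|\nabla\phi|^2-|\nabla|\phi||^2)$ and $n\int_M\eta^2|\phi|^2$ on the left, leaving a Caccioppoli-type inequality
\begin{equation}
\int_M\eta^2|\nabla|\phi||^2\,dv+\int_M\eta^2|\phi|^2\,dv\le C(n)\int_M|\nabla\eta|^2|\phi|^2\,dv .
\end{equation}

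Finally, by the concluding cutoff argument already used in the Euclidean setting of Theorem~\ref{LXZ}, I would take $\eta$ to be standard cutoffs exhausting $M$ (so $|\nabla\eta|\le C/r$ on $B_{2r}\setminus B_r$) and use the finiteness of the total curvature to drive the right-hand side to zero as $r\to\infty$, obtaining $\int_M|\phi|^2\,dv=0$; hence $\phi\equiv0$ and $M$ is totally geodesic. The step on which I expect to spend the most effort, and the main obstacle, is the interplay of the two preceding paragraphs: controlling the Hessian term $\sum_{ij}\phi_{ij}(nH)_{,ij}$ arising from the non-constancy of $H$, and establishing the refined Kato inequality in the quasi-umbilical setting with a constant large enough that, after the Sobolev absorption governed by the small total curvature, the coercive terms genuinely dominate.
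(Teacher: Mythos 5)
Your reduction to $\phi\equiv 0$ is sound (indeed $|\phi|^2=n(n-1)H^2$ makes $\phi\equiv 0$ equivalent to total geodesy), and your bookkeeping of the Simons identity, the Okumura bound, and the integration by parts of $\sum_{ij}\phi_{ij}(nH)_{,ij}$ via $\sum_j\phi_{ij,j}=(n-1)H_{,i}$ is correct. But the argument does not close at exactly the point you flag: the refined Kato inequality $|\nabla\phi|^2\ge(1+\delta_n)|\nabla|\phi||^2$ is asserted, not proved, and it is not available from standard results. Since $H$ is non-constant here, $\phi=h-Hg$ is neither a Codazzi tensor nor divergence-free, so the usual refined Kato constants for such tensors do not apply. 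Concretely, using $|\nabla|\phi||^2=n(n-1)|\nabla H|^2$ and $|\nabla\phi|^2=|\nabla h|^2-n|\nabla H|^2$, your inequality is equivalent to $|\nabla h|^2\ge\bigl(n^2+\delta_n\, n(n-1)\bigr)|\nabla H|^2$, whereas the sharp general estimate for the Codazzi tensor $h$ is only $|\nabla h|^2\ge\frac{3n^2}{n+2}|\nabla H|^2$, which is strictly weaker than $n^2|\nabla H|^2$ for $n\ge 2$. Without a strictly positive multiple of $\int\eta^2|\nabla|\phi||^2$ on the left, the term $\varepsilon\int\eta^2|\nabla|\phi||^2$ produced by feeding the quartic term through H\"older and the Sobolev inequality has nothing to absorb it, no matter how small $\alpha$ is, and the Caccioppoli inequality you want does not follow. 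A secondary problem: the quasi-umbilicity of conformally flat hypersurfaces (Cartan--Schouten) on which you pin your hopes for the refined Kato inequality holds only for $n\ge 4$; for $n=3$ local conformal flatness is the vanishing of the Cotton tensor and does not force $n-1$ principal curvatures to coincide, so your intended mechanism is unavailable in the lowest dimension covered by the theorem.

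The paper takes a different and, in this respect, decisive route. It works not with $\phi$ but with $r_{ij}=R_{ij}-(n-1)\delta_{ij}$; local conformal flatness makes $r$ a Codazzi tensor and lets one express the full curvature tensor through the Ricci tensor (Lemma 2.1), which yields the clean identity $\sum_{i,j}r_{ij}\Delta r_{ij}=\frac{n}{n-2}\mathrm{tr}(r^3)+n|r|^2$ with no Hessian-of-$H$ term at all. The positive gradient term needed for the Sobolev absorption is then generated for free by testing $|r|\Delta|r|\ge\frac{n}{n-2}\mathrm{tr}(r^3)+n|r|^2$ against $f^2|r|^q$ with $q=\frac{n-4}{2}$: integration by parts produces $(q+1)\int f^2|r|^q|\nabla|r||^2$ on the good side, so only the ordinary Kato inequality is ever used. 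Note also that the resulting conclusion $|r|=0$ is weaker than $\phi=0$: it gives $h^2=nHh$, i.e.\ principal curvatures $(nH,0,\dots,0)$, which is why the paper needs the additional step (Lemma 2.3 plus Proposition 1.1, via Bonnet--Myers and a maximum-principle computation) to conclude $H\equiv 0$. If you want to salvage your approach, you should either prove the refined Kato inequality you need in this specific quasi-umbilical setting (and treat $n=3$ separately), or switch to a weighted test function in the spirit of the paper so that the coercive gradient term does not have to come from Kato at all.
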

Theorem 1.2 is proved by analyzing the following Simons' type inequality (where to derive this inequality the assumption of locally conformally flatness is used)
$$| r | \Delta | r |\geq \frac { n } { n - 2 }\rm tr(r^3) + n | r | ^ { 2 }, $$
where $|r|^{2}=\sum_{i, j}r_{i j}^{2}$ and $r_{i j}:=R_{ij}-(n-1)\delta_{ij}$. If $M$ is complete with sufficiently small total curvature, from the above inequality we obtain that $|r|=0$, which implies by the Gauss equations that $nHh_{ij}=\sum_kh_{ik}h_{kj}$ for any $i,j=1,...,n.$ Then we can prove that $M$ has at least $n-1$ zero principle curvatures. At last we prove the following fact to finish the proof of Theorem 1.2.
\begin{proposition}
Assume that $M$ is a complete hypersurface in unit sphere  $\mathbb{S}^{n+1}$. If $M$ has at least $n-1$ zero principle curvatures everywhere, then $M$ is totally geodesic.
\end{proposition}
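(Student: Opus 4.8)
The plan is to argue by contradiction. Suppose $M$ is not totally geodesic, and let $\lambda$ denote the (at most one) possibly nonzero principal curvature; set $\Omega=\{p\in M:\lambda(p)\neq 0\}$, an open set which by assumption is nonempty. On $\Omega$ the second fundamental form $h$ has rank exactly one, so the relative nullity distribution $\mathcal{D}=\ker h$ is smooth of constant rank $n-1$, and $\Omega$ is precisely the locus where the index of relative nullity attains its minimum value $n-1$. The goal becomes to show that $\Omega$ cannot exist.

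First I would record the standard structure of the nullity distribution. Writing the Codazzi equations of $\mathbb{S}^{n+1}$ in a frame $e_1,\dots,e_n$ with $he_1=\lambda e_1$ and $e_2,\dots,e_n$ spanning $\mathcal{D}$, one obtains $\langle\nabla_{e_k}e_j,e_1\rangle=0$ for $j,k\geq 2$; hence $\mathcal{D}$ is integrable and its leaves are totally geodesic in $M$. Since moreover $\mathcal{D}\subset\ker h$, the second fundamental form of such a leaf $L$ in $\mathbb{S}^{n+1}$ splits as the (vanishing) second fundamental form of $L$ in $M$ plus $h|_{\mathcal{D}}=0$, so each leaf is totally geodesic in $\mathbb{S}^{n+1}$ as well. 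The crucial extra input is completeness: because $M$ is complete and $\Omega$ is the minimal relative-nullity set, Ferus' theorem on the completeness of nullity foliations guarantees that every leaf of $\mathcal{D}$ in $\Omega$ is a complete submanifold. A complete totally geodesic $(n-1)$-dimensional submanifold of $\mathbb{S}^{n+1}$ is a great sphere, i.e. $L=\mathbb{S}^{n+1}\cap V$ for an $n$-dimensional linear subspace $V\subset\mathbb{R}^{n+2}$.

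The contradiction is then purely linear-algebraic, and this is where the hypothesis $n\geq 3$ enters. The leaves of a foliation are pairwise disjoint, and moving along an integral curve of $e_1$, which is transverse to $\mathcal{D}$, produces distinct leaves $L$ and $L'$. Each is a full great sphere, say $L=\mathbb{S}^{n+1}\cap V$ and $L'=\mathbb{S}^{n+1}\cap V'$; if $V=V'$ the leaves coincide, contradicting distinctness, while if $V\neq V'$ then $\dim(V\cap V')\geq 2n-(n+2)=n-2\geq 1$, so $\mathbb{S}^{n+1}\cap V\cap V'\subseteq L\cap L'$ is nonempty, contradicting disjointness. Either way we reach a contradiction, forcing $\Omega=\emptyset$ and hence $\lambda\equiv 0$, so that $M$ is totally geodesic. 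I expect the main obstacle to be the completeness of the leaves: the algebraic punchline is immediate, but to run it one genuinely needs each leaf to be the entire great sphere rather than a proper open piece, which is exactly the content of the nullity-completeness theorem and must be invoked with care, in particular by verifying that $\Omega$ is the minimum-nullity locus so that the theorem applies.
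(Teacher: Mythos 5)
Your route is genuinely different from the paper's, and its first stages are sound: on the open set $\Omega$ where $h$ has rank one, $\ker h$ is indeed a smooth integrable distribution whose leaves are totally geodesic in $\mathbb{S}^{n+1}$, $\Omega$ is the minimum-nullity locus, and Ferus' completeness theorem does apply, so each leaf maps onto a great $(n-1)$-sphere. For comparison, the paper argues instead that the Gauss equation forces all sectional curvatures of $M$ to equal $1$, so Bonnet--Myers gives compactness; then at a point where the largest principal curvature $\mu$ attains a positive maximum, the Codazzi equations yield $\omega_{r1}=(\log\mu)_r\,\omega_1$ and hence $(\log\mu)_{rs}=(\log\mu)_r(\log\mu)_s+\delta_{rs}$, which at the maximum gives $0\geq(\log\mu)_{rr}=1$, a contradiction. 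That argument works for immersions and for all $n\geq2$.

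The genuine gap in your proposal is the final step, unless $M$ is assumed embedded. The leaves are pairwise disjoint as subsets of $M$, but a hypersurface here is in general only an isometric immersion $f\colon M\to\mathbb{S}^{n+1}$ (the paper nowhere assumes embeddedness, and Proposition 1.1 is invoked for the $M$ of Theorem 1.2 in that generality). Your dimension count correctly shows that the great spheres $f(L)=\mathbb{S}^{n+1}\cap V$ and $f(L')=\mathbb{S}^{n+1}\cap V'$ must meet, but a common point only produces $x\in L$ and $x'\in L'$ with $x\neq x'$ and $f(x)=f(x')$, i.e.\ a self-intersection of the immersion --- not a contradiction. Closing this for immersed hypersurfaces is not routine; known nullity-foliation proofs of such nonexistence results require substantially more analysis along the complete leaves than the intersection argument. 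Two smaller points: your argument needs $n\geq3$ (two great circles in $\mathbb{S}^{3}$ can be disjoint), whereas the statement and the paper's proof also cover $n=2$ --- harmless for the application, where $n\geq3$ --- and the claim that a transversal to $\mathcal{D}$ meets distinct leaves deserves a word (a single leaf meets a foliation chart in at most countably many plaques). If "hypersurface" is taken to mean embedded, your proof is complete; otherwise you must either supply the missing analysis or use a compactness-plus-maximum-principle argument as the paper does.
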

 Proposition 1.1 was pointed out by Wu that it can be proved by similar argument with that used in the proof of Theorem 2.2 in \cite{Wu}. For convenience of the reader we will give the details of the proof of Proposition 1.1.

The rest of the article is arranged as follows. In section 2 we will list and prove several useful Lemmas which will be used in the rest of our paper. Theorem 1.2 and Proposition 1.1 are proved in section 3.

\section{Notations and Lemmas}

    Assume that $M^n$ is a hypersurface in unit sphere $\mathbb{S}^{n+1}$. We choose a local orthonormal frame field $\left\{e_1,...,e_n, e_{n+1}\right\}$ along $M$, where $\left\{e_{i}\right\}_{i=1, \ldots, n}$ are tangent to $M$ and $e_{n+1}$ are normal to $M$. Let $\left\{\omega_{A}\right\}_{A=1, \ldots, n+1}$ be the corresponding dual coframe, and $\left\{\omega_{A B}\right\}$ the connection 1-forms on $\mathbb{S}^{n+1}$.  We make the convention on the range of indices that $1\leq A,B,...\leq n+1, 1\leq i,j,...\leq n$. The structure equation of $\mathbb{S}^{n+1}$ are
    $$
    \begin{aligned}
d\omega_A&=-\sum_{B}\omega_{A B}\wedge \omega_B, \omega_{A B}+\omega_{B A}=0,
\\ d\omega_{A B}&=-\sum_C\omega_{A C}\wedge\omega_{C B}+\frac12\sum_{C,D}K_{A B C D}\omega_C\wedge\omega_D,\\
K_{A B C D}&=\delta_{AC}\delta_{BD}-\delta_{AD}\delta_{BC},
    \end{aligned}
$$

\noindent where $K_{ABCD}$ is the curvature tensor of $\mathbb{S}^{n+1}$. When restricted on $M$ we have $\omega_{n+1}=0$. Hence $0=d\omega_{n+1}=-\sum_i\omega_{n+1i}\wedge\omega_i$. By Cartan's lemma, there exists local functions $h_{ij}$ such that
 $$
 \begin{aligned}
 \omega_{n+1i}=\sum_jh_{ij}\omega_j, h_{ij}=h_{ji}.
 \end{aligned}
 $$
 \noindent The second fundamental form is $h=\sum_{ij}h_{ij}\omega_i\otimes\omega_j.$ We also write $h=\big(h_{ij}\big)$ as a matrix and call the eigenvalues of $h$ the principle curvatures of $M$. The mean curvature of $M$ in $\mathbb{S}^{n+1}$ is defined by

$$H:=\frac{1}{n} \operatorname{tr} h.$$
The structure equations of $M$ are
$$
    \begin{aligned}
d\omega_i&=-\sum_{j}\omega_{ij}\wedge \omega_j, \omega_{ij}+\omega_{ji}=0,
\\ d\omega_{ij}&=-\sum_k\omega_{ik}\wedge\omega_{kj}+\frac12\sum_{kl}R_{ijkl}\omega_k\wedge\omega_l,
    \end{aligned}
$$
    \noindent and the Gauss equations and Codazzi equations are given by

    $$R_{i j k l}=\delta_{i k} \delta_{j l}-\delta_{i l} \delta_{j k}+h_{i k} h_{j l}-h_{i l} h_{j k}\eqno (2.1)$$

    $$ h_{i j, k}=h_{i k, j}\eqno (2.2)$$

    \noindent where the covariant derivative of $h_{i j}$ is defined by

    $$\sum_{k=1}^{n} h_{i j, k} \omega_{k}=d h_{i j}-\sum_{k=1}^{n} h_{k j} \omega_{k i}-\sum_{k=1}^{n} h_{i k} \omega_{k j}$$

    \noindent In particular,  components of the Ricci tensor are given by

    $$R_{i j}=(n-1) \delta_{i j}+n Hh_{i j}-\sum_{ k}h_{i k} h_{k j}\eqno (2.3)$$

    \noindent Assume that $R$ is the scalar curvature of $M$, then by $(2.3)$ we have

    $$R=n(n-1)+n^{2} H^{2}-|h|^{2}\eqno (2.4)$$

    \noindent where $|h|^{2}=\sum_{i, j}\left(h_{i j}\right)^{2}$ is the squared norm of the second fundamental form.

We have
\begin{lemma}
	Let $M^n (n\geq3)$ be a hypersurface in $\mathbb{S}^{n+1}$ with constant scalar curvature $R=n(n-1)$. If $M$ is locally conformally flat, we have
	$$R_{l i j k}=-\varphi_{i j} \delta_{l k}+\varphi_{i k} \delta_{l j}-\delta_{i j} \varphi_{l k}+\delta_{i k} \varphi_{l j}$$
	where $\varphi_{i j}=\frac{R_{i j}-\frac{n}{2} \delta_{i j}}{n-2}$.
\end{lemma}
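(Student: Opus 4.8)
The plan is to obtain the identity from the universal algebraic decomposition of the Riemann tensor into its Weyl part and a part built from the Ricci and scalar curvatures, and then to exploit local conformal flatness to kill the Weyl part. Concretely, for any Riemannian $n$-manifold ($n\geq 3$) one has
$$R_{ijkl}=W_{ijkl}+\frac{1}{n-2}\left(R_{ik}\delta_{jl}-R_{il}\delta_{jk}+R_{jl}\delta_{ik}-R_{jk}\delta_{il}\right)-\frac{R}{(n-1)(n-2)}\left(\delta_{ik}\delta_{jl}-\delta_{il}\delta_{jk}\right),$$
where $W_{ijkl}$ is the Weyl conformal curvature tensor; this is purely algebraic, and one checks it is consistent with the paper's conventions by contracting over the first and third indices (with the Weyl tensor being trace-free) to recover $R_{jl}$. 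The first step is to record that local conformal flatness forces $W_{ijkl}=0$: for $n\geq 4$ this is the classical characterization of locally conformally flat manifolds, while for $n=3$ the Weyl tensor vanishes identically, so in either case the Weyl term is absent.

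The second step is purely computational. I would insert the hypothesis $R=n(n-1)$, which turns the scalar coefficient into $\frac{R}{(n-1)(n-2)}=\frac{n}{n-2}$, and then rewrite the Ricci terms through the definition of $\varphi$, namely $R_{ij}=(n-2)\varphi_{ij}+\frac{n}{2}\delta_{ij}$. Substituting this into the four Ricci terms produces the four desired $\varphi$-terms plus a leftover multiple $\frac{n}{n-2}\left(\delta_{ik}\delta_{jl}-\delta_{il}\delta_{jk}\right)$ coming from the constant shifts $\frac{n}{2}\delta_{ij}$. The whole point of the shift by $\frac{n}{2}\delta_{ij}$, together with the value $R=n(n-1)$, is that this leftover cancels exactly against the scalar term $-\frac{n}{n-2}\left(\delta_{ik}\delta_{jl}-\delta_{il}\delta_{jk}\right)$, leaving
$$R_{ijkl}=\varphi_{ik}\delta_{jl}-\varphi_{il}\delta_{jk}+\varphi_{jl}\delta_{ik}-\varphi_{jk}\delta_{il}.$$
Relabeling the indices by $(i,j,k,l)\mapsto(l,i,j,k)$ then reproduces the stated formula.

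There is no analytic obstacle here; the only thing that requires care is bookkeeping, in particular keeping the sign and symmetry conventions of $R_{ijkl}$ consistent between the Weyl decomposition and the Gauss equation (2.1) used elsewhere in the paper, and checking that the constant-shift term cancels precisely rather than leaving a residual scalar piece. As a safeguard I would verify the final formula by contracting it over the first and third indices: using $R=n(n-1)$ one finds $\operatorname{tr}\varphi=\frac{n}{2}$, and the contraction returns $R_{jl}=(n-2)\varphi_{jl}+\frac{n}{2}\delta_{jl}$, which is exactly the definition of $\varphi_{jl}$ and thus confirms both the identity and the indispensable role of the constant scalar-curvature hypothesis.
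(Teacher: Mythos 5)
Your proposal is correct and follows essentially the same route as the paper: both start from the Weyl decomposition of the Riemann tensor, use local conformal flatness (and, for $n=3$, the identical vanishing of the Weyl tensor) to drop the Weyl term, substitute $R=n(n-1)$, and check that the constant shift in $\varphi_{ij}$ cancels the scalar-curvature term. The contraction check at the end is a nice extra safeguard but does not change the argument.
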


\begin{proof}
Recall that the Riemmanian curvature tensor has the following decomposition
$$R_{l i j k}=W_{l i j k}+\frac{1}{n-2}(R_{ik}\delta_{lj}-R_{ij}\delta_{lk}+R_{lj}\delta_{ik}-R_{lk}\delta_{ij})
-\frac{R}{(n-1)(n-2)}(\delta_{ik}\delta_{lj}-\delta_{ij}\delta_{lk}).$$ Since $M$ is locally conformally flat, i.e. $W_{l i j k}=0$, and $R=n(n-1)$, we get $$R_{l i j k}=-\varphi_{i j} \delta_{l k}+\varphi_{i k} \delta_{l j}-\delta_{i j} \varphi_{l k}+\delta_{i k} \varphi_{l j}$$ by direct computation.
\end{proof}

\begin{lemma}[\cite{Ok}]
 Let $a_i , i = 1, 2, . . . , n$ be real numbers satisfying
 $$\sum_{i=1}^{n} a_{i}=0, \sum_{i=1}^{n} a_{i}^{2}=|\mu|^{2},$$
then
$$ \left|\sum_{i=1}^{n} a_{i}^{3}\right| \leq \frac{n-2}{\sqrt{n(n-1)}}|\mu|^{3}.$$
The equality holds if and only if $n-1$ terms of $\left\{a_{i}\right\}_{i=1}^{n}$ are equal.
\end{lemma}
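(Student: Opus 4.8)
The plan is to treat this as a constrained optimization problem. Since the asserted inequality is homogeneous of degree three in the $a_i$, I would first rescale so that $|\mu|=1$, reducing matters to bounding $\sum_i a_i^3$ over the compact constraint set $S=\{(a_1,\dots,a_n): \sum_i a_i=0,\ \sum_i a_i^2=1\}$. Continuity of the map $a\mapsto\sum_i a_i^3$ on the compact set $S$ guarantees that its maximum and minimum are both attained, and since $S$ is invariant under $a\mapsto -a$ (which flips the sign of $\sum_i a_i^3$), it suffices to bound the maximum.

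Next I would apply the method of Lagrange multipliers at a maximizer. The stationarity condition reads $3a_i^2=\lambda+2\tau a_i$ for every $i$ and suitable constants $\lambda,\tau$. Thus each $a_i$ is a root of the single quadratic $3x^2-2\tau x-\lambda=0$, so the numbers $a_i$ take at most two distinct values. The configuration in which all $a_i$ coincide is excluded, because $\sum_i a_i=0$ would then force every $a_i=0$, contradicting $\sum_i a_i^2=1$. Hence at a maximizer there is an integer $k$ with $1\le k\le n-1$ such that $k$ of the $a_i$ equal a value $a$ and the remaining $n-k$ equal a value $b\neq a$.

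The third step is the explicit reduction. Solving the two constraints $ka+(n-k)b=0$ and $ka^2+(n-k)b^2=1$ gives $a^2=(n-k)/(nk)$ and $b^2=k/(n(n-k))$, and a direct computation yields
$$\left|\sum_{i=1}^{n} a_i^3\right|=\frac{|2k-n|}{\sqrt{nk(n-k)}}.$$
It then remains to maximize the right-hand side over $k\in\{1,\dots,n-1\}$. Writing $v=2k-n$ and using $k(n-k)=(n^2-v^2)/4$, the square of this quantity becomes $4v^2/(n(n^2-v^2))$, which is strictly increasing in $v^2$ on $[0,n^2)$; it is therefore largest when $|v|$ is largest, namely at $k=1$ or $k=n-1$, where $v^2=(n-2)^2$. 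Substituting back produces exactly $\frac{n-2}{\sqrt{n(n-1)}}$, which proves the bound. For the equality case, the analysis shows the extremum is attained only for $k\in\{1,n-1\}$, i.e. precisely when $n-1$ of the $a_i$ are equal; conversely, the same computation confirms that this configuration realizes the bound.

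I expect the main obstacle to be the rigorous justification of the Lagrange-multiplier step: one must check that every maximizer is a regular point of the two constraints so that stationarity genuinely applies, and one must dispose of the degenerate possibility that the defining quadratic has a repeated root (which would force all $a_i$ equal, a case already ruled out by the constraints). Once the two-value structure of extremal configurations is secured, the remaining maximization over $k$ and the equality discussion are elementary.
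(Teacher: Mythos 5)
Your argument is correct and complete: the constraint set is compact, every point of it is a regular point of the two constraints (the gradients $(1,\dots,1)$ and $2(a_1,\dots,a_n)$ are dependent only when all $a_i$ coincide, which the constraints exclude), so Lagrange multipliers legitimately force each $a_i$ to be a root of one quadratic and hence the extremals to be two-valued; the reduction to $\lvert 2k-n\rvert/\sqrt{nk(n-k)}$ and its maximization at $k\in\{1,n-1\}$ are correct, and the equality discussion (including the degenerate case $|\mu|=0$) comes out right. Note, however, that the paper does not prove this lemma at all — it is quoted verbatim from Okumura's paper \cite{Ok} and used as a black box — so there is no in-paper proof to compare against. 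Okumura's original derivation is purely algebraic (manipulating the power sums under the two constraints to bound the cubic sum directly), whereas your route is variational: compactness plus stationarity. What your approach buys is a short, self-contained proof in which the equality characterization falls out automatically from classifying the critical configurations, at the cost of invoking the multiplier theorem rather than staying within elementary inequalities; for the purposes of this paper either justification of the lemma is equally serviceable, since only the inequality and the rigidity statement are used (in the proof of Lemma 2.3).
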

Following the argument in \cite{LXZ}, we have
\begin{lemma}
Let $M^n (n\geq3)$ be a hypersurface  in $\mathbb{S}^{n+1}$ with constant scalar curvature $R=n(n-1)$. Then at least $n-1$ eigenvalues of $h_{i j}$ are zero if
$\sum_jh_{i j} h_{j k}=n H h_{i k}$.
\end{lemma}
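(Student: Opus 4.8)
The plan is to work pointwise and diagonalize the second fundamental form. Fix $p \in M$ and choose the orthonormal frame so that $h_{ij} = \lambda_i \delta_{ij}$ at $p$, where $\lambda_1, \dots, \lambda_n$ are the principal curvatures. First I would rewrite the hypothesis $\sum_j h_{ij} h_{jk} = nH h_{ik}$ in matrix form as $h^2 = nH\, h$. Applying both sides to an eigenvector shows that each principal curvature satisfies $\lambda_i^2 = nH \lambda_i$, that is $\lambda_i(\lambda_i - nH) = 0$, so that $\lambda_i \in \{0, nH\}$ for every $i$. At the same time, by definition $\sum_i \lambda_i = \operatorname{tr} h = nH$, while combining $(2.4)$ with $R = n(n-1)$ gives $|h|^2 = n^2 H^2$, a relation consistent with (and in fact implied by) $h^2 = nH\,h$.

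The counting step is then immediate. Let $k$ denote the number of indices $i$ with $\lambda_i = nH$, so that the remaining $n-k$ principal curvatures vanish. Taking the trace of $h^2 = nH\,h$, equivalently using $\sum_i \lambda_i = nH$, yields $k\cdot nH = nH$. If $H \neq 0$ this forces $k = 1$, so exactly $n-1$ of the $\lambda_i$ are zero. If $H = 0$ then $nH = 0$ and every $\lambda_i$ equals $0$, so all $n$ principal curvatures vanish. In either case at least $n-1$ eigenvalues of $h_{ij}$ vanish at $p$, and since $p$ was arbitrary the claim follows.

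To align the argument with Okumura's lemma (Lemma 2.2) as in \cite{LXZ}, one can alternatively pass to the trace-free tensor $\mu_{ij} = h_{ij} - H\delta_{ij}$, whose eigenvalues $a_i = \lambda_i - H$ satisfy $\sum_i a_i = 0$ and $\sum_i a_i^2 = |h|^2 - nH^2 = n(n-1)H^2 =: |\mu|^2$. Using $h^2 = nH\,h$ one gets $h^3 = n^2 H^2 h$, hence $\operatorname{tr} h^3 = n^3 H^3$, and expanding gives $\sum_i a_i^3 = n(n-1)(n-2)H^3$. Comparing with the Okumura bound $|\sum_i a_i^3| \le \frac{n-2}{\sqrt{n(n-1)}}|\mu|^3 = (n-2)n(n-1)|H|^3$ shows that equality holds, so that $n-1$ of the $a_i$ coincide.

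The one point requiring care, which I regard as the only real obstacle, is translating this equality case back into the statement that $n-1$ of the \emph{actual} principal curvatures $\lambda_i$ vanish: here one combines the equal-eigenvalue conclusion with $\lambda_i \in \{0, nH\}$ and uses $n \geq 3$ to rule out the spurious configuration in which the $n-1$ equal curvatures all equal $nH$ (which would force $\lambda_n = -n(n-2)H \notin \{0,nH\}$ when $H \neq 0$). Since the direct counting of the second paragraph already delivers the conclusion without this case analysis, I would present that route as the main proof and invoke the Okumura computation only to exhibit the link to \cite{LXZ}.
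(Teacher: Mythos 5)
Your proposal is correct, and your main route is genuinely different from (and more elementary than) the paper's. The paper follows \cite{LXZ}: it computes $\operatorname{tr}(\mu^3)$ for the trace-free tensor $\mu_{ij}=h_{ij}-H\delta_{ij}$, shows that equality holds in Okumura's inequality (Lemma 2.2), concludes that $n-1$ eigenvalues of $\mu$ coincide, and only then feeds the resulting diagonal form $\operatorname{diag}\{v_1+H,\dots,v_1+H,v_2+H\}$ back into the quadratic relation $h^2=nH\,h$ to deduce $v_1+H=0$. Your direct argument — each $\lambda_i$ satisfies $\lambda_i(\lambda_i-nH)=0$, hence $\lambda_i\in\{0,nH\}$, and the trace identity $\sum_i\lambda_i=nH$ forces exactly one nonzero eigenvalue when $H\neq 0$ and none when $H=0$ — reaches the conclusion in two lines without Okumura's lemma, and as you observe it does not even need the hypothesis $R=n(n-1)$, since $|h|^2=n^2H^2$ already follows from taking the trace of $h^2=nH\,h$. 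Your remark about the equality case of Okumura is also well taken: the paper's closing sentence ``Therefore we have $(v_1+H)=0$'' silently requires ruling out the configuration where the $n-1$ equal eigenvalues are the nonzero ones, which is exactly the case analysis you spell out (using $n\geq 3$ to get $\lambda_n=-(n-2)nH\notin\{0,nH\}$ for $H\neq 0$). In short, your counting argument buys simplicity and makes the logical dependencies transparent, while the paper's route exhibits the connection to the Okumura-type pinching machinery of \cite{LXZ}; both are valid.
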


\begin{proof}
Since $\sum_j h_{i j} h_{j k}=n H h_{i k}$ and $R=n(n-1)$, by (2.4) we have
$$\rm tr\left(h^{3}\right)=\sum_{i,j,k}h_{i j} h_{j k} h_{i k}=n H \sum_{i, k=1}^{n}\left(h_{i k}\right)^{2}=n^3H^3$$

\noindent Set $\mu_{i j}:=h_{i j}-H \delta_{i j}$. Then we have
$$|\mu|^{2}=(n-1) n H^{2},$$
and
$$
\begin{aligned}
\rm tr\left(h^{3}\right) &=\rm tr\left(\mu^{3}\right)+3 H \rm tr\left(\mu^{2}\right)+n H^{3} \\
&=\rm tr\left(\mu^{3}\right)+3 H|h|^{2}-2 n H^{3}.
\end{aligned}
$$
Therefore, we have
$$\left|\rm tr\left(\mu^{3}\right)\right|=n(n-1)(n-2)\left|H^{3}\right|=\frac{n-2}{\sqrt{n(n-1)}}|\mu|^{3}.$$

\noindent Let $p\in M$ and assume that $\mu_{i j}$ is diagonal at $p$, then by Lemma 2.2 we may assume that $\left(\mu_{i j}\right)=\operatorname{diag}\left\{v_{1}, \ldots, v_{1}, v_{2}\right\}$. Thus at $p$,
$$h_{i j}=\operatorname{diag}\left\{v_{1}+H, \ldots, v_{1}+H, v_{2}+H\right\}$$

\noindent Since $\sum_jh_{i j} h_{j k}=n H h_{i k}$, we have
$$
\left(v_{1}+H\right)^{2}=\left((n-1)\left(v_{1}+H\right)+\left(v_{2}+H\right)\right)\left(v_{1}+H\right),
$$
and
$$
\left(v_{2}+H\right)^{2}=\left((n-1)\left(v_{1}+H\right)+\left(v_{2}+H\right)\right)\left(v_{2}+H\right) .
$$
Therefore  we have $(v_{1} + H) = 0$.

\end{proof}
Recall that the following Michael-Simon's inequality holds true.
\begin{lemma}[\cite{MS}]
	Let $M^n$ be a sub-manifold  in $\mathbb{R}^{n+p}$. Then for any  function $\varphi \in C_{0}^{1}(M)$,
	we have
	$$\left(\int_{M}|\varphi|^{\frac{n}{n-1}} \mathrm{~d} M\right)^{\frac{n-1}{n}} \leq C_{1}\left(\int_{M}|\nabla \varphi| \mathrm{d} M+ \int_{M}|H \varphi| \mathrm{d} M\right)$$
\end{lemma}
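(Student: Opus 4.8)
This is the Michael–Simon Sobolev inequality, and the plan is to prove it through the first variation formula for submanifolds combined with a monotonicity formula and a covering/truncation argument. Throughout I normalize the base point to $x_0=0$, write $r=|x|$, let $\vec H$ denote the mean curvature vector of $M$ in $\mathbb R^{n+p}$, and let $\nabla_M$, $\operatorname{div}_M$ denote the intrinsic gradient and tangential divergence. I set $g:=|\varphi|$, regularized so that $g\in C_0^1(M)$ and $g\ge 0$.

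First I would record the tangential divergence theorem: for any $C^1$ vector field $X$ with compact support,
$$\int_M \operatorname{div}_M X\, d\mathcal H^n = -\int_M \langle X,\vec H\rangle\, d\mathcal H^n.$$
Applying this to $X=\phi(r)\,g(x)\,x$, where $\phi$ is a radial cut-off supported in $B_\rho$, and using $\operatorname{div}_M x=n$ together with $\langle\nabla_M r,x\rangle=r^{-1}|x^{\top}|^2$ and $\langle x,\vec H\rangle=\langle x^{\perp},\vec H\rangle$, I obtain a differential inequality for the weighted mass $I(\rho):=\int_{M\cap B_\rho} g\, d\mathcal H^n$. Integrating in $\rho$ and using the smooth-density limit $\lim_{\sigma\to0}\sigma^{-n}\int_{M\cap B_\sigma} g=\omega_n g(0)$ yields the monotonicity estimate
$$\omega_n\, g(0)\ \le\ \rho^{-n}\!\int_{M\cap B_\rho}\! g\, d\mathcal H^n + \int_{M\cap B_\rho}\!\Big(r^{-n}-\rho^{-n}\Big)\big(|\nabla_M g|+g\,|\vec H|\big)\, d\mathcal H^n.$$

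Next I would convert this pointwise density bound into the Sobolev inequality. Writing $f:=|\nabla_M g|+g\,|\vec H|$ for the right-hand density and $\nu:=f\,d\mathcal H^n$, the second term is a Riesz-type potential $\int_{B_\rho} r^{-n}\,d\nu$. A Vitali covering argument applied to the super-level sets $\{g>t\}$, controlling the density of the measure $g\,d\mathcal H^n$ by $\nu$, produces a weak-type estimate; the standard truncation/Marcinkiewicz procedure exploiting the scaling homogeneity then upgrades it to the strong estimate
$$\Big(\int_M g^{\frac n{n-1}}\,d\mathcal H^n\Big)^{\frac{n-1}{n}} \le C_1\int_M\big(|\nabla_M g|+g\,|\vec H|\big)\,d\mathcal H^n.$$
Taking $g=|\varphi|$ and absorbing the factor relating $|\vec H|$ to the scalar $H$ of the statement into $C_1$ gives the claim.

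The main obstacle is the monotonicity step: one must track the signs of the terms generated by $\phi'(r)\,|x^{\top}|^2/r$ and by the mean-curvature contribution, bound the latter using $|\langle x^{\perp},\vec H\rangle|\le r\,|\vec H|$, and justify letting $\phi$ tend to the indicator of $B_\rho$ so as to arrive at a clean differential inequality whose integration produces the displayed estimate. Once the pointwise density bound is established, the covering and truncation argument completing the proof is technical but routine.
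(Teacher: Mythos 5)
The paper offers no proof of this lemma: it is quoted directly from Michael and Simon \cite{MS}, so there is no internal argument to compare against. Your outline (tangential divergence theorem applied to $\phi(r)\,g\,x$, integration to a monotonicity-type density bound, then a covering argument and truncation to pass from the weak-type to the strong estimate) is a faithful summary of the proof in that reference, and apart from the technical details you explicitly defer, I see no gap in it.
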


Following the argument of \cite{LXZ} and \cite{Xu} (see also \cite{BL}) we get the following variant of Michael-Simon's inequality.

\begin{lemma}
	Let $M^n (n\geq3)$ be a hypersurface in $\mathbb{S}^{n+1}$. Suppose that $\|H\|_{n} C_{1}<1$
	where $C_{1}$ is a constant in Lemma 2.4. Then for any $f \in C_{0}^{1}(M)$ we have
	$$\left(\int_{M}|f|^{\frac{2 n}{n-2}} \mathrm{~d} M\right)^{\frac{n-2}{n}} \leq C_{s} \int_{M}|\nabla f|^{2} +f^2\mathrm{~d} M, \eqno (2.5)$$
	where $C _ { s } = 2( \frac { C _ { 1 } } { 1 - \| H \| _ { n } C _ { 1 } } \frac { 2 n - 2 } { n - 2 } ) ^ { 2 }$
\end{lemma}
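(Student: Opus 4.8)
The plan is to deduce the $L^2$ Sobolev inequality (2.5) from the $L^1$ Michael--Simon inequality of Lemma 2.4 by choosing a suitable power of $f$ as a test function and then absorbing the curvature contribution. Since Lemma 2.4 is stated for submanifolds of Euclidean space, first I would regard $M^n$ as a submanifold of $\mathbb{R}^{n+2}$ through the standard chain of inclusions $M^n\subset\mathbb{S}^{n+1}\subset\mathbb{R}^{n+2}$. A short computation of the second fundamental form of this composition shows that the mean curvature vector $\vec H_{\mathbb R}$ of $M$ in $\mathbb{R}^{n+2}$ splits into the part coming from $M\subset\mathbb{S}^{n+1}$ (of length $|H|$) and the part coming from $\mathbb{S}^{n+1}\subset\mathbb{R}^{n+2}$ (of length $1$, and orthogonal to the first), so that $|\vec H_{\mathbb R}|=\sqrt{1+H^2}\le 1+|H|$. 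This is the only place where the ambient sphere enters, and it is precisely what is responsible for the extra $f^2$ term on the right-hand side of (2.5).

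Next I would apply Lemma 2.4 to $\varphi=|f|^{\gamma}$ with the exponent $\gamma=\frac{2(n-1)}{n-2}=\frac{2n-2}{n-2}$, chosen so that $\frac{n}{n-1}\gamma=\frac{2n}{n-2}$ and $\gamma-1=\frac{n}{n-2}$. Writing $I:=\int_M|f|^{2n/(n-2)}\,dM$, the left-hand side of Lemma 2.4 becomes $I^{(n-1)/n}$, while the gradient term equals $\gamma\int_M|f|^{n/(n-2)}|\nabla f|$, which by Cauchy--Schwarz is at most $\gamma I^{1/2}\bigl(\int_M|\nabla f|^2\bigr)^{1/2}$. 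For the curvature term I would use $|\vec H_{\mathbb R}|\le 1+|H|$ and treat the two pieces separately: the $|H|$ piece is estimated by H\"older as $\int_M|H||f|^\gamma\le\|H\|_n I^{(n-1)/n}$ (using $\frac{n}{n-1}\gamma=\frac{2n}{n-2}$), and the constant piece is split by Cauchy--Schwarz as $\int_M|f|^\gamma=\int_M|f|\cdot|f|^{n/(n-2)}\le\bigl(\int_M f^2\bigr)^{1/2}I^{1/2}$, which is exactly where the term $\int_M f^2$ is produced.

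Collecting these estimates gives
$$I^{(n-1)/n}\le C_1\|H\|_n\,I^{(n-1)/n}+C_1\gamma\,I^{1/2}\Bigl(\int_M|\nabla f|^2\Bigr)^{1/2}+C_1\,I^{1/2}\Bigl(\int_M f^2\Bigr)^{1/2}.$$
Because $f$ has compact support, $I<\infty$, so under the hypothesis $\|H\|_nC_1<1$ the first term on the right can be absorbed into the left. Dividing the remaining inequality by $I^{1/2}$ and noting $(n-1)/n-1/2=(n-2)/(2n)$ yields
$$I^{(n-2)/(2n)}\le\frac{C_1}{1-\|H\|_nC_1}\Bigl[\gamma\Bigl(\int_M|\nabla f|^2\Bigr)^{1/2}+\Bigl(\int_M f^2\Bigr)^{1/2}\Bigr].$$
Since $\gamma\ge 1$ for $n\ge 3$, I would bound the bracket by $\gamma\bigl[(\int_M|\nabla f|^2)^{1/2}+(\int_M f^2)^{1/2}\bigr]$, square both sides, and apply $(a+b)^2\le 2(a^2+b^2)$; this gives precisely $I^{(n-2)/n}\le C_s\int_M(|\nabla f|^2+f^2)\,dM$ with $C_s=2\bigl(\frac{C_1}{1-\|H\|_nC_1}\cdot\frac{2n-2}{n-2}\bigr)^2$.

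The computation is routine; the point that must be handled with care is the curvature term. The obstacle is that a naive approach would try to absorb the entire $\int_M|\vec H_{\mathbb R}||f|^\gamma$ into the left-hand side via H\"older, but $\|\vec H_{\mathbb R}\|_n=\|\sqrt{1+H^2}\|_n$ contains the volume of $M$, which may be infinite for a complete noncompact hypersurface. The resolution---and the reason the inequality carries the lower-order term $f^2$---is to absorb only the genuinely small $|H|$ part through the smallness assumption $\|H\|_nC_1<1$, and to dispose of the constant part of the mean curvature by a Cauchy--Schwarz splitting that converts $\int_M|f|^\gamma$ into $(\int_M f^2)^{1/2}I^{1/2}$. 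One should also note that $|f|^\gamma\in C_0^1(M)$ (legitimate since $\gamma>1$), so that Lemma 2.4 applies, with the usual approximation argument near the zero set of $f$ if needed.
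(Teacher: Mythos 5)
Your proposal is correct and follows essentially the same route as the paper: the authors likewise view $M^n\subset\mathbb{S}^{n+1}\subset\mathbb{R}^{n+2}$, use $|\bar H|^2=1+|H|^2$ to bound $|\bar H|\le 1+|H|$, apply Lemma 2.4 to $\varphi=f^{\frac{2(n-1)}{n-2}}$, and conclude by H\"older/Cauchy--Schwarz together with absorbing the $\|H\|_nC_1<1$ term. Your write-up simply supplies the absorption and squaring steps that the paper leaves as ``the conclusion follows.''
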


\begin{proof}
 Seem $M^n$ has mean curvature $\bar{H}$ as a submanifold in $\mathbb{R}^{n+2}$. Then it is easy to see that
$$|\bar{H}|^{2}=1+|H|^{2}.$$
By Lemma 2.4 we have
$$
\begin{aligned}
\left(\int_{M}|\varphi|^{\frac{n}{n-1}} d M\right)^{\frac{n-1}{n}} &\leq C_{1}\left(\int_{M}|\nabla \varphi|+\int_{M}|\bar{H}||\varphi| d M\right)
\\&\leq C_{1}\left(\int_{M}|\nabla \varphi|+\int_M|\varphi|dM+\int_{M}|H||\varphi| d M\right)
\end{aligned}
$$
Let $\varphi=f^\frac{2(n-1)}{n-2}$. Then by H\"older's inequality, we have
$$\left(\int _ { M } | f | ^ { \frac { 2 n } { n - 2 } } d M )\right)^ { \frac { n - 1 } { n } } \leq C _ { 1 } [\frac { 2 ( n - 1 ) } { n - 2 } ( \int _ { M } | f | ^ { \frac { 2 n } { n - 2 } } d M ) ^ { \frac { 1 } { 2 } } ( \int _ { M } | \nabla f |^ { 2 } d M ) ^ { \frac { 1 } { 2 } }$$
$$+( \int _ { M } f ^ { 2 } d M ) ^ { \frac { 1 } { 2 } } ( \int _ { M } | f | ^ { \frac { 2 n } { n - 2 } } d  M ) ^ { \frac { 1 } { 2 } } +\| H \| ^ { n } ( \int _ { n } | f | ^ { \frac { 2 n } { n - 2 } }d M ) ^ { \frac { n - 1 } { n } }],$$
\noindent and the conclusion follows.
\end{proof}

\section{Proof of Theorem 1.2}

\begin{proof}
\noindent Let $$r _ { i j } = R _ { i j } - ( n - 1 ) \delta _ { i j },$$ then $$r _ { i j , k } = r _ { i k , j },$$ since $M$ is locally conformally flat. By Ricci formula and Lemma 2.1, we have

$$
\begin{aligned}
&\sum_lr _ { i j , l l }
\\&=\sum_lr _ { i l , j l }\\
=&\sum_l r _ { i l , l j } + \sum_{l,k}r _ { k l } R _ { k i j l } +\sum_{l,k} r _ { i k } R _ { k l j l }\\
=& \frac { 1 } { n - 2 } ( 2\sum_k r _ { k i } r _ { k j } + ( n - 2 ) r _ { i j } - \sum_{l,k}r _ { k \ell }^2 \delta _ { i j } ) +\sum_k r _ { i k } r _ { k j } + ( n - 1 )\sum_k r _ { i k } \delta _ { k j }
\\&=\frac{n}{n-2}\sum_{k}r_{i k}r_{kj}-\frac{1}{n-2}|r|^2\delta_{i j}+nr_{i j}.
\end{aligned}
$$

\noindent Multiplying $r _ { i j }$ to both sides of above identity and summing up for $i, j$ from 1 to $n$ we obtain
$$\sum_{i,j}r _ { i j } \Delta r _ { i j } = \frac { n } { n - 2 }\rm tr(r^3) + n | r | ^ { 2 }, \eqno (3.1)$$
where $r$ is the matrix of $(r _ { i j })$ and $|r|^{2}=\sum_{i, j}r_{i j}^{2}$. By
$$\sum_{i,j}r _ { i j } \Delta r _ { i j } + \sum_{i,j}| \nabla r _ { i j } | ^ { 2 } = | r | \Delta | r | + | \nabla | r | |^ { 2 },$$
\noindent we get
$$| r | \Delta | r |\geq \frac { n } { n - 2 }\rm tr(r^3) + n | r | ^ { 2 }, \eqno (3.2)$$

\noindent since $\sum_{i,j}| \nabla r _ { i j } | ^ { 2 }\geq | \nabla | r | |^ { 2 }$.

Let $f$ be a cut-off function supported in a geodesic ball $B(o, R)$ with $o\in M $ such that $$| \nabla f | \leq \frac { C } { R }, | \Delta f | \leq \frac { C } { R ^ { 2 } }.$$ Multiplying both sides of $(3.2)$ by $f ^ { 2 } | r | ^ { q } (q>-1)$, and by integral by parts we have (remark: actually since $|r|$ may have zero points, we need to choose the test function $f ^ { 2 } (| r |+\epsilon)^ { q } (q>-1)$ and then let $\epsilon\to 0^+$)

$$
\begin{aligned}
 0 \geq & 2 \int _ { M } f | r | ^ { q + 1 }\langle \nabla f, \nabla | r |\rangle + ( q + 1 ) \int _ { M } f ^ { 2 } | r | ^ { q }  |\nabla | r | |^ { 2 }\\
& + \frac { n  } { n - 2 } \int _ { M } f ^ { 2 } | r | ^ { q }\rm tr(r^3)+ n\int _ { M }  f ^ { 2 } |r | ^ { q+2 }.
\end{aligned}
$$

\noindent Therefore

$$
\begin{aligned}
n\int _ { M }  f ^ { 2 } |r | ^ { q+2 } + ( q + 1 ) \int _ { M } f ^ { 2 }& | r | ^ { q } |\nabla | r | |^ { 2 }\leq \left| \frac { n  } { n - 2 } \int _ { M } f ^ { 2 } | r | ^ { q }\rm tr(r^3) \right|\\
&+\frac { 2 } { q + 2 } \int _ { M } \rm div( f \nabla f ) | r | ^ { q + 2 }.
\end{aligned}\eqno (3.3)
$$

\noindent Let $q = \frac { n - 4 } { 2 }$. Using the Gauss formula,

$$
\begin{aligned}
\sum_{i,j} r _ { i j } ^ { 2 } &= \sum_{i,j}( n H h _ { i j } -\sum_k h _ { i k } h _ { k j } ) ^ { 2 }\\
&= n ^ { 4 } H ^ { 4 } - 2 n H \rm tr ( h^ { 3 } ) + \sum _ { i , j } ( \sum _ { k } h _ { i k } h _ { k j } ) ^ { 2 } \\
&\leq 4 n ^ { 4 } H ^ { 4 }.
\end{aligned}
$$

\noindent It follows that
$$
\begin{aligned}
&\int _ { M } | r | ^ { q + 2 }= \int _ { M } | r | ^ { n / 2 }
\\&\leq( 4 n ^ { 4 } ) ^ { \frac { n } { 4 } } \int _ { M } | H | ^ { n }
<( 4 n ^ { 4 } ) ^ { \frac { n } { 4 } } \alpha ^ { n } < + \infty.
\end{aligned}
$$
\noindent By H\"older's inequality and Lemma 2.5, we have

$$
\begin{aligned}
\left| \int _ { M } f ^ { 2 } |r | ^ { q } \rm tr(r^3)\right| & \leq  \int _ { M } f ^ { 2 } | r | ^ { q + 3 }\\
& \leq \left( \int _ { M } ( f | r | ^ { \frac { q } { 2 } + 1 } ) ^ { \frac { 2 n } { n - 2 } } \right) ^ { \frac { n - 2 } { n } } \left( \int _ { M } | r | ^ { \frac { n } { 2 } } \right) ^ { \frac { 2 } { n } }\\
&\leq C _ { s } \left( \int _ { M } | r | ^ { \frac { n } { 2 } } \right) ^ { \frac { 2 } { n } } \left( \int _ { M } | \nabla ( f | r | ^ { \frac { q } { 2 } + 1 } ) | ^ { 2 } +f ^ { 2 } | r | ^ { q + 2 }\right)\\
& \leq 2 C _ { s }\left( \int _ { M } | r | ^ { \frac { n } { 2 } } \right) ^ { \frac { 2 } { n } }   [ \int _ { M } | \nabla f | ^ { 2 } | r | ^ { q + 2 }+ ( \frac { q } { 2 } + 1 ) ^ { 2 } \int _ { M } f ^ { 2 } | r | ^ { q } | \nabla | r | |^ { 2 }.\\
& +\int _ { M }f ^ { 2 } | r | ^ { q + 2 })]
\end{aligned}
$$

\noindent Combining  this estimate with (3.3) we get

$$
\begin{aligned}
E\int _ { M }  f ^ { 2 } |r | ^ { q+2 }&+F \int _ { M } f ^ { 2 } | r | ^ { q } | \nabla | r | |^ { 2 } \\
\leq 2C _ { s } \frac { n } { n - 2 } ( \int _ { M } | r | ^ { \frac { n } { 2 } } ) ^ { \frac { 2 } { n } }  \int _ { M } | \nabla& f | ^ { 2 } | r | ^ { q + 2 } + \frac { 2 } { q + 2 } \int _ { M } \rm div( f \nabla f ) | r | ^ { q + 2 },
\end{aligned}
$$

\noindent where $E=n-2C _ { s } (\int _ { M } | r | ^ { \frac { n } { 2 } } ) ^ { \frac { 2 } { n } }$ and $F=q + 1  - 2C _ { s } \frac { n } { n - 2 } ( \frac { q } { 2 } + 1 ) ^ { 2 } ( \int _ { M } | r | ^ { \frac { n } { 2 } } ) ^ { \frac { 2 } { n } }$.
\noindent Assume that $\alpha$ is sufficiently small such that $E, F$ are greater than zero. Letting $R \rightarrow + \infty$, we obtain that $| r | = 0$. Therefore,
$$n Hh_{i j}=\sum_kh_{i k} h_{k j}.$$
By Lemma 2.3, at least $n-1$ eigenvalues of $h_{i j}$ are zero.

At last by following the argument used in the proof of Theorem 2.2 in \cite{Wu},  we can prove that all eigenvalues of  $(h_{i j})$ are zero, i.e. $M$ is totally geodesic. For convenience of the reader we give the details here.
\\
 \\ \textbf{Proof of Proposition 1.1.} Since $n-1$ eigenvalues of $h$ are zero, by the Gauss equation we see that the sectional curvature of $M$ equals to one everywhere. Then by the Bonnet-Myers' theorem we see that $M$ is compact. If $M$ is not totally geodesic, then there is a point $p$ such that the biggest eigenvalue of $h$, say $\lambda_1=\mu$ attains its maximum value $\mu(p)>0$ at $p$. Then there is a neighborhood of $p\in U\subset M$ such that on $U$, $\lambda_1=\mu>0$ and $\lambda_2=...=\lambda_n=0$. Next we restrict our analysis on $U$.

Assume that $2\leq r,s\leq n$, then
 $$
 \begin{aligned}
 \sum_{i}h_{1r,i}\omega_i=dh_{1r}-\sum_k(h_{k1}\omega_{kr}+h_{kr}\omega_{k1})=-\mu\omega_{1r}.
 \end{aligned}
 $$ Note that
 $$
  \begin{aligned}
 \sum_{i}h_{1r,i}\omega_i&=h_{1r,1}\omega_1+\sum_{i>1}h_{1r,i}\omega_i
 \\&=h_{11,r}\omega_1+\sum_{i>1}h_{ir,1}\omega_i
 \\&=\mu_r\omega_1.
 \end{aligned}
 $$

Therefore
 $$\mu_r\omega_1=-\mu\omega_{1r}=\mu\omega_{r1},$$
which implies that
$$\omega_{r1}=\big(\log\mu\big)_r\omega_1.$$
Take the exterior differentiation of the above equation and by the structure equation on $M$ we obtain

$$
\begin{aligned}
d\omega_{r1}&=-\sum_s\omega_{rs}\wedge\omega_{s1}+\omega_r\wedge\omega_1
\\&=-\sum_s\big(\log\mu\big)_s\omega_{rs}\wedge\omega_1+\omega_r\wedge\omega_1,
\\d\omega_{r1}&=d(\big(\log\mu\big)_r\omega_1)=-\big(\log\mu\big)_r\sum_s\omega_{1s}\wedge\omega_s
+\sum_s[(\log\mu)_{rs}\omega_s+\big(\log\mu\big)_s\omega_{sr}]\wedge\omega_1
\\&=\sum_s-\big(\log\mu\big)_r\big(\log\mu\big)_s\omega_s\wedge\omega_1+\sum_s\big(\log\mu\big)_{rs}\omega_s\wedge\omega_1-\sum_s\big(\log\mu\big)_s\omega_{rs}\wedge\omega_1.
\end{aligned}
$$

Comparing the above formulas we get on $U$
$$\big(\log\mu\big)_{rs}=\big(\log\mu\big)_r\big(\log\mu\big)_s+\delta_{rs}.$$
Since $p$ is a maximum point of $\mu$ on $M$, we see that at $p$,
$$0\geq\big(\log\mu\big)_{rr}=\big(\log\mu\big)_r^2+\delta_{rr}=1,$$
a contradiction. Hence we get the conclusion of Proposition 1.1.

This completes the proof of Theorem 1.2. \end{proof}
\vspace{0.5cm}
\textbf{Declarations}
\\

\textbf{Funding} The second author is supported by the NSF of China (Grant No. 12271069) and Chongqing NSF (Grant No. cstc2021jcjy-msxmX0443).
\\

\textbf{Conflict of interest} The authors hereby state that there are no conflicts of
interest regarding the presented results.
\\

\textbf{Data availability statements} No data sets were generated or analysed during the current study.

\vspace{1cm}\sc

Jinchuan Bai, Yong Luo

Mathematical Science Research Center of Mathematics,

Chongqing University of Technology,

Chongqing, 400054, China

{\tt baijinchuan23@163.com, yongluo-math@cqut.edu.cn}

\end{document}